\newtheorem{dummy}{Dummy}
\newtheorem{theorem}[dummy]{Theorem}
\newtheorem{proposition}[dummy]{Proposition}
\newtheorem{corollary}[dummy]{Corollary}
\theoremstyle{definition}
\newtheorem{example}[dummy]{Example}
\newcommand{\ignore}[1]{}
\author{S. Pumpl\"un}
\email{susanne.pumpluen@nottingham.ac.uk}
\address{School of Mathematical Sciences\\
University of Nottingham\\
University Park\\
Nottingham NG7 2RD\\
United Kingdom
}
\begin{document}

\title[The automorphisms of differential extensions of characteristic $p$]{The automorphisms of differential extensions of characteristic $p$}

\begin{abstract}
 Nonassociative differential extensions are generalizations of associative differential extensions, either of a purely inseparable field extension $K$  of exponent one of a field $F$, $F$ of characteristic $p$, or of a central division algebra over a purely inseparable field extension of $F$. Associative differential extensions are well known central simple algebras first defined by Amitsur and Jacobson.
We explicitly compute the automorphisms of nonassociative differential extensions. These
 are canonically obtained by restricting automorphisms of the differential polynomial ring used in the construction of the algebra.
 In particular, we obtain descriptions for the automorphisms of associative differential extensions of $D$ and $K$, which are known to be inner.
\end{abstract}

\keywords{Differential polynomial ring, skew polynomial, differential polynomial,
 differential extension, nonassociative division algebra, automorphisms}

\subjclass[2020]{Primary: 17A35; Secondary: 17A60, 17A36, 16S32}

\maketitle

\section{Introduction}\label{sec1}

Let $C$ be a field of characteristic $p>0$, $D$ a finite-dimensional division algebra over its center $C$ ($D$ may be $C$) and
$\delta$ a quasi-algebraic derivation on $D$, such that $\delta|_C$ has minimal polynomial $g(t)=t^{p^e}+a_1t^{p^{e-1}}+\dots+ a_et\in {\rm Const}(\delta)[t]$. When the differential polynomial $f(t)=g(t)-d\in R=D[t;\delta] $
generates a two-sided ideal $Rf$, then the quotient algebra $(D,\delta,d)=D[t;\delta]/(f)$ is a well-known associative central simple algebra over  the field  $F=C\cap {\rm Const}(\delta)$,  called a \emph{(generalized) differential extension of $D$} (we will usualy omit the term ``generalized'' here).
 Differential extensions  were originally defined and investigated by Amitsur  \cite{Am2},  Hoechsmann \cite{Hoe}, and Jacobson
  (for a recent comprehensive exposition cf. \cite[Sections 1.5, 1.8, 1.9]{J96}).
Indeed,  all associative central division algebras over a field $F$ of characteristic zero can
be constructed this way, using differential polynomials \cite{Am, Hoe}, see \cite[Sections 1.5, 1.8, 1.9]{J96}.

 Differential extensions $(D,\delta,d)$ were  generalized to nonassociative differential extensions of $D$  by employing differential polynomials $f(t)=g(t)-d\in D[t;\delta]$ in the construction that are not right-invariant (i.e., $Rf$ is not a two-sided ideal), using the following approach:
 Given $f\in D[t;\delta]$ of degree $m$, we equip the set of all differential polynomials of degree less than $m$ with a nonassociative ring structure  $g\circ h=gh \,\,{\rm mod}_r f $ using the remainder of dividing $gh$ on the right by $f$ to define the multiplication \cite{P16, P17}.
The resulting nonassociative unital ring (with the usual polynomial addition) denoted $(D,\delta,d)=D[t;\delta]/D[t;\delta]f$
is a nonassociative  algebra over $F$ \cite{P66}.
If $f\in D[t;\delta]$ is not right-invariant, then the right nucleus of $(D,\delta,d)$ is the eigenring of $f$ employed in \cite{Am} and \cite{O}.
 In particular, when $D=C$, then $\delta$ has the minimum polynomial $g(t)=t^p-t$ and
 $f(t)=t^p-t-d\in C[t;\delta]$ is irreducible, then
 $(C,\delta,d)=C[t;\delta]/C[t;\delta]f$ is a  unital nonassociative division algebras over $F$ of dimension $p^{2}$ for
 all $d\in C\setminus F$. This type of algebra  canonically generalizes Amitsur's associative cyclic extensions of degree $p$ and it was shown already in \cite{P17} that its automorphism group indeed also has a cyclic subgroup of order $p$.

 In this paper, we explicitly describe the automorphisms of the nonassociative differential extensions  $(D,\delta,d)$ and $(K,\delta,d)$, where  $D$ a finite-dimensional central division algebra
 over $C$, and  $K/F$ is a purely inseparable field extension  of exponent one. Results on the automorphisms of their associative ``cousins'' are obtained as well in the process.

 After  recalling the definitions of nonassociative differential extensions $(D,\delta,d)$ of a central simple division algebra  $D$ and of
 nonassociative differential extensions $(K,\delta,d)$ of a field $K$ in Section \ref{sec:prel},
 we investigate the automorphisms of $(D,\delta,d)$ in Section \ref{sec:2} and those of $(K,\delta,d)$ in Section  \ref{sec:3}.

The associative  differential extensions $(D,\delta,d)$ and $(K,\delta,d)$  are central simple over their center $F$, so all their algebra automorphisms are inner. We show that all automorphisms of the algebras  $(D,\delta,d)$ and  $(K,\delta,d)$ are canonically induced by certain automorphisms in  ${\rm Aut}(D[t;\sigma])$, respectively ${\rm Aut}(K[t;\sigma])$, and are of the form
 $$H_{\tau,c,e}( \sum_{i=0}^{p^e-1} d_i t^i )=\sum_{i=0}^{p^e-1}\tau( d_i ) (et+c)^i$$
  for suitable $e\in C^\times$, $c\in D$, respectively, of the form
 $H_{\tau,c,e}( \sum_{i=0}^{p^e-1} d_i t^i )=\sum_{i=0}^{p^e-1}\tau( d_i ) (et+c)^i$ for suitable $c\in K$, $e\in K^\times$. In particular,
  they extend an automorphism $\tau$ of $D$, respectively of $K$ (Theorems \ref{thm:aut1}, \ref{thm:aut1K}). Furthermore,
  $\{H_{\tau,0,1}\,|\, \tau\in {\rm Aut}_F(D) \text{ where } \delta\circ \tau=\tau\circ \delta \text{ and } d\in {\rm Fix}(\tau) \}$
 is a subgroup of ${\rm Aut}((D,\delta,d))$ (Corollary \ref{cor:7}).
 Moreover, the set of all logarithmic derivatives $\{ \delta(a)/a\,|\, a\in C \}$ and within that group,
 $\langle H_{id,-1}\rangle$, are subgroups of ${\rm Aut}((D,\delta,d))$ and of ${\rm Aut}((K,\delta,d) )$. The latter is a cyclic subgroup of order $p$ which leaves $D$, respectively $K$, invariant (Proposition \ref{prop:3}).
 The subgroup $\{H_{i_a, a^{-1}\delta(a)}\,|\, a\in {\rm Nuc}((D,\delta,d))^\times\}$
 of ${\rm Aut}_F((D,\delta,d) )$ is a subgroup of inner automorphisms (Theorem \ref{thm:main inner}), and the set of all logarithmic derivatives $\{\delta(a)/a\,|\, a\in K \}$ is a subgroup of inner automorphisms of $(K,\delta,d)$ (Theorem \ref{thm:main4}).
 All of this also applies when  $(D,\delta,d)$, respectively $(K,\delta,d)$, is associative.

The results presented here complement the ones on automorphisms of nonassociative cyclic and nonassociative generalized cyclic algebras $(K/F,\sigma,d)$ and $(D,\sigma,d)$ in arbitrary characteristic in \cite{BP19, P21}.


\section{Preliminaries} \label{sec:prel}

\subsection{Nonassociative algebras} \label{subsec:nonassalgs}


Let $F$ be a field. An $F$-vector space $A$ is called an
\emph{algebra} over $F$, if there is an algebra multiplication $\cdot$ defined on it, i.e. a map $A\times
A\to A$, $(x,y) \mapsto x \cdot y$ which is $F$-bilinear. The  multiplication $x \cdot y$ in $A$ is usually
denoted simply by juxtaposition. An algebra $A$ is called
\emph{unital} if there is an element in $A$, denoted by 1, such that
$1x=x1=x$ for all $x\in A$. We will only consider unital algebras.

An algebra $A\not=0$ is called a \emph{division algebra} if for any
$a\in A$, $a\not=0$, the left multiplication  with $a$, $L_a(x)=ax$,
and the right multiplication with $a$, $R_a(x)=xa$, are bijective.
If $A$ has finite dimension over $F$, $A$ is a division algebra if and only if $A$ has no zero divisors \cite[pp. 15, 16]{Sch}.

Associativity in $A$ is measured by the {\it associator} $[x, y, z] =
(xy) z - x (yz)$. The {\it left nucleus} ${\rm
Nuc}_l(A) = \{ x \in A \, \vert \, [x, A, A]  = 0 \}$,  {\it
middle nucleus}  ${\rm Nuc}_m(A) = \{ x \in A \, \vert \,
[A, x, A]  = 0 \}$, {\it right nucleus}
${\rm Nuc}_r(A) = \{ x \in A \, \vert \, [A,A, x]  = 0 \}$ and  {\it nucleus}
 ${\rm Nuc}(A) = \{ x \in A \, \vert \, [x, A, A] = [A, x, A] = [A,A, x] = 0 \}$ are associative
subalgebras of $A$. The nucleus contains $F1$
and $x(yz) = (xy) z$ whenever one of the elements $x, y, z$ is in ${\rm Nuc}(A)$. The
 {\it center} of $A$ is ${\rm C}(A)=\{x\in \text{Nuc}(A)\,|\, xy=yx \text{ for all }y\in A\}$.

 For a subring $B$ of a unital ring $A$, the \emph{centralizer} (also called the  \emph{commutator subring} if $A$ is associative) of $B$ in $A$ is defined as
${\rm Cent}_A(B)=\{a\in A\,|\, ab=ba \text{ for all } b\in B\}$.
If ${\rm Cent}_A(B)=B$ then ${\rm Cent}_A(B)$ is a maximal commutative nonassociative subring of $A$.

An element $0\not=a\in A$ has a \emph{left inverse} $a_l\in A$, if
$L_{a_l}(a)=a_l a=1$, and a \emph{right inverse}
 $a_r\in A$, if $R_{a_r}(a)=a a_r=1$. If  $a_r=a_l$ then we denote this element by
$a^{-1}$.

An automorphism $G\in {\rm Aut}_F(A)$ is called an \emph{inner automorphism}
if there is an element $a\in A$ with left inverse $a_l$ such
that $G(x) =G_a(x)= (a_lx)a$ for all $x\in A$.
The set of inner automorphisms $\{G_a\,|\, a\in {\rm Nuc}(A) \text{ invertible} \}$ is a subgroup of ${\rm Aut}_F(A)$ (\cite{W09}, \cite[Proposition 5.7]{CB}).
Given an inner automorphism $G_a\in {\rm Aut}_F(A)$ and any $H\in {\rm Aut}_F(A)$, $H^{-1}\circ G\circ H$ is the inner automorphism $G_{H(a)}(x)= (H^{-1}(a_l)x)H(a)$ \cite[p.~69]{CB}.

\subsection{Differential polynomial rings}\label{subsec:diff}

Let $D$ be an associative division ring. A \emph{derivation} on $D$ is an
additive map $\delta:D\rightarrow D$ such that $\delta(ab)=a\delta(b)+\delta(a)b$ for all $a,b\in D$.
 The \emph{differential polynomial ring} $D[t;\delta]$
is the set of polynomials $a_0+a_1t+\dots +a_nt^n$ with $a_i\in D$,
where addition is defined term-wise and multiplication by $ta=at+\delta(a)$ for all $a\in D.$
$R=D[t;\delta]$ is a left and right principal ideal domain and there is a right division algorithm in $R$: for all
$g,f\in R$, $g\not=0$, there exist unique $r,q\in R$ with ${\rm
deg}(r)<{\rm deg}(f)$, such that $g=qf+r$
 \cite{ J96, P66}.

  We call $f\in R$ a \emph{right-invariant} polynomial, if $fR\subset Rf$.
  If $f$ is right invariant then $Rf$ is a two-sided ideal.

 For $f=a_0+a_1t+\dots +a_nt^n$ with $a_n\not=0$ define ${\rm
deg}(f)=n$ and ${\rm deg}(0)=-\infty$. Then ${\rm deg}(fg)={\rm deg}(f)+{\rm deg}(g).$
 An element $f\in R$ is \emph{irreducible} in $R$
 if  it is no unit and it has no proper factors, i.e there do not exist
 $g,h\in R$, neither a unit, such that $f=gh$ \cite[p.~11]{J96}.

If $D$ is of characteristic $p$ and $R=D[t;\delta]$, define
$V_p(b)=b^p+\delta^{p-1}(b)+*$
 for all $b\in D$, with $*$ a sum of commutators of $b$, $\delta(b),\dots,$ and $ \delta^{p-2}(b)$, that is
$V_2(b)=b^2+\delta(b),$ $ V_3(b)=b^3+\delta^{2}(b)+[\delta(b),b],  $
 and so on  \cite[p.~18, (1.3.20)]{J96}.  If $D$ is commutative, or if $b\in D$ commutes with all its derivatives, then the sum $*$ in the equation is 0 and the formula simplifies to
$V_p(b)=b^p+\delta^{p-1}(b)$
 \cite[p.~17 ff.]{J96}.

  We have
$(t-b)^p=t^p-V_p(b)$
 for all $b\in D$ \cite[1.3.19]{J96}. Iterating this equation yields
$(t-b)^{p^e}=t^{p^e}-V_{p^e}(b)$
for all $b\in D$,  where
$V_{p^e}(b)=V_p^e(b)=V_p(\dots (V_p(b))\dots )$,
where the number of terms $V_p$ on the right-hand side is $e$ \cite[1.3.22]{J96}. For any $p$-polynomial
$f(t)=a_0t^{p^e}+a_1t^{p^{e-1}}+\dots+a_et-d\in D[t;\delta]$
we obtain
$$f(t)-f(t-b)=a_0V_{p^e}(b)+a_1V_{p^{e-1}}(b)+\dots+a_eb$$
for all $b\in D$ and define
$V_f(b)=a_0V_{p^e}(b)+a_1V_{p^{e-1}}(b)+\dots+a_eb.$

\subsection{Nonassociative algebras obtained from differential polynomial rings} \label{subsec:structure}

Let $R=D[t;\delta]$ and $f\in R$ of degree $m$. Let ${\rm mod}_r f$ denote the remainder of right division by $f$.
Define  ${\rm Cent}(\delta)=\{a\in D\,|\, \delta(a)=0\}$. 
 Then the vector space
$R_m=\{g\in D[t;\delta]\,|\, {\rm deg}(g)<m\}$
 together with the multiplication $g\circ h=gh \,\,{\rm mod}_r f ,$
where ${\rm mod}_r f$ denotes the remainder by right division with $f$, is a unital nonassociative ring that  we denote by $R/Rf$ or sometimes  $S_f$.  In the following, we denote the multiplication in these algebras  simply by juxtaposition. The ring $S_f$ is an algebra over
 $\{a\in D\,|\, ah=ha \text{ for all } h\in S_f\}$.
This is a commutative subring of $D$ \cite[(7)]{P66} and indeed, it is easy to check that ${\rm Cent}(\delta)\cap C(D)=\{a\in D\,|\, ah=ha \text{ for all } h\in S_f\}$. We write $F={\rm Cent}(\delta)\cap C(D)$  for this field. The algebra
 $S_f$ is associative if and only if $f$ is right-invariant. In this case, $S_f$ is the usual quotient algebra  $R/(f)$.

If $S_f$ is not associative then ${\rm Nuc}_l(S_f)={\rm Nuc}_m(S_f)=D$ and ${\rm Nuc}_r(S_f)=\{g\in R\,|\, fg\in Rf\}.$
 If $f\in R$ is irreducible and $S_f$ a finite-dimensional $F$-vector space
or free of finite rank as a right ${\rm Nuc}_r(S_f)$-module, then $S_f$
is a division algebra. Conversely, if $S_f$ is a division algebra then $f$ is irreducible.

We will assume from now on and throughout the paper that $C$ is a field  of characteristic $p$ and $D$  a finite-dimensional division algebra with center $C$ of degree $n$ (we allow $D=C$).
Let  $\delta$ be a derivation of $D$, such that $\delta|_C$ is algebraic with minimum polynomial
$g(t)=t^{p^e}+a_1t^{p^{e-1}}+\dots+ a_et\in {\rm Const}(\delta)[t]$
 of degree $p^e$.
  In particular, $\delta$ is a quasi-algebraic derivation on $D$ and so $R=D[t;\delta]$ is not simple \cite{LLLM},
 and $g(\delta)=I_{d_0}$ is an inner
derivation of $D$.  W.l.o.g. we may choose $d_0\in {\rm Const}(\delta)$ \cite[Lemma 1.5.3]{J96}.
 The center of $R=D[t;\delta]$  is $F[z]$ with
$z=g(t)-d_0$.
 For all $a\in C$, define a homomorphism $V:C\rightarrow F$ between additive groups via
  $V(a)=V_g(a)=V_{p^e}(a)+a_1V_{p^{e-1}}(a)+\dots+a_ea$  \cite{J37}. Then
$V(a)=0 \text{ if and only if }a=\delta(c)/c$
 for some $c\in C$ (\cite{J37}, cf. also \cite[p.~2]{Hoe}).

 Given $\tau\in {\rm Aut}(D)$ and $p(t)\in R$, the map
 $$H_{\tau,p(t)}:D[t;\delta]\rightarrow D[t;\delta], \, H_{\tau,p(t)}(\sum_{i=0}^{n} b_i t^i )= \sum_{i=0}^n\tau(a_i)p(t)^i$$
is a ring homomorphism if and only if
$p(t)\tau(z)=\tau(\sigma(z))p(t)+\tau(\delta(z))$
for all $z\in D$; and $H$ is bijective if and only if ${\rm deg} \, p(t)=1$  \cite[p. 4]{LL}.
  Indeed, for $p(t)=et+c$ with $c\in D$, $e\in D^\times$, $\tau \in {\rm Aut}(D)$, the
  map $H_{\tau,c,e}=H_{\tau,p(t)}$
is an automorphism of $D[t;\delta]$, if and only if
\begin{equation}\label{main}
c \tau(z)+e\delta(\tau(z))=\tau(z)c+\tau(\delta(z))
\end{equation}
 for all $z\in D$ and $e\in C^\times$.

 This means that  for $c\in D$, $e\in D^\times$, we have $H_{id,c,e}\in {\rm Aut}(R)$ if and only if
 $e\in C^\times$; $H_{\tau,0,1}\in {\rm Aut}(R)$ if and only if $\delta\circ\tau=\tau\circ\delta$; $H_{\tau,c,1}\in {\rm Aut}_F(R)$ if and only if $c\in C^\times$ \cite[Corollary 4.4]{CB}.
  In particular, if  $c\in C$ then $H_{\tau,c,e}$ is an automorphism of $R$, if and only if
 $\delta $ and $\tau$ commute \cite[Proposition 4.6]{CB}.

Any automorphism  $H:D[t;\delta]\rightarrow D[t;\delta]$ canonically induces an isomorphism
$S_f\cong S_{H(f)}.$
In particular, any $F$-automorphism  $H:D[t;\delta]\rightarrow D[t;\delta]$ canonically induces an isomorphism of $F$-algebras
$S_f\cong S_{H(f)}.$

\section{The automorphisms of nonassociative  differential extensions of $D$} \label{sec:2}

From now on we assume that $F$ is strictly contained in $C$.

For  $f(t)=g(t)-d\in D[t;\delta] $, the unital nonassociative $F$-algebra
$$(D,\delta,d)=D[t;\delta]/D[t;\delta] f(t)$$
is called a \emph{nonassociative (generalized) differential extension of $D$}, however, we will usually omit the term ``generalized in the following.
$(D,\delta,d)$ has center $F$ and dimension $p^{2e}n^2$.
For $d\in D\setminus F$ it has left and middle nucleus $D$.
For $d\in C\setminus F$ we have  ${\rm Nuc}_r((D,\delta,d))=D$. This follows from the corrected version of the proof of \cite[Lemma 19]{P17}, cf.
\cite[Lemma 21]{Psemi}, and implies that $D= {\rm Nuc}((D,\delta,d))$ for $d\in C\setminus F$.

For $d\in C\setminus F$, $(C,\delta|_C,d)$ is a subalgebra of $(D,\delta,d)$.
For all $d\in D$ and $a\in C$, $(D,\delta,d)\cong (D,\delta,d-V(a)).$

$(D,\delta,d)$ is associative if and only if $d\in F$ and a division algebra if and only if $f(t)$ is irreducible.
For $d\in F$, $(D,\delta,d)$ is a central simple algebra over $F$ \cite[p.~23]{J96}.
For associative algebras $(D,\delta,d)$, the centralizer of $C$ is $D$ \cite[Theorem 3.1]{Hoe}, and it is clear that $D\subset {\rm Cent}_{(D,\delta, d)}(C)=\{a(t)\in (D,\delta, d)\,|\, a(t)c=ca(t) \text{ for all } c\in C\}$ also in the nonassociative case.
\\
\begin{proposition} \label{prop:centralizer}
 Let $\delta$ have minimum polynomial $g(t)=t^p+a_1t$.
 For all $d\in D$, the centralizer ${\rm Cent}_{(D,\delta, d)}(C)$ of $C$  in $(D,\delta, d)$ is $D$.
\end{proposition}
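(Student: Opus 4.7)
The plan is to exploit that when $e=1$ and $f(t)=t^p+a_1t-d$, any element of $(D,\delta,d)$ has a unique presentation $a(t)=\sum_{i=0}^{p-1}d_it^i$ with $d_i\in D$, and that the inclusion $D\subseteq{\rm Cent}_{(D,\delta,d)}(C)$ is already noted in the text. So what must be shown is: if $a(t)c=ca(t)$ for every $c\in C$, then $d_1=\cdots=d_{p-1}=0$.

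First I would record by induction from $tc=ct+\delta(c)$ the expansion
$$t^ic=\sum_{j=0}^{i}\binom{i}{j}\delta^{i-j}(c)\,t^j,$$
valid because $\delta$ restricts to a derivation of $C$ (implicit in $\delta|_C$ having a minimum polynomial). A decisive observation is that both $a(t)c$ and $ca(t)$ already have degree strictly less than $p=\deg f$, so the product in $(D,\delta,d)$ agrees with the product in $D[t;\delta]$ and no reduction by $f$ is required. Using that $C$ is central in $D$, so $cd_i=d_ic$, a straightforward rearrangement gives
$$a(t)c-ca(t)=\sum_{j=0}^{p-2}\Bigl(\sum_{i=j+1}^{p-1}\binom{i}{j}\,d_i\,\delta^{i-j}(c)\Bigr)t^j.$$

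The crux is then a descending induction on the index of the surviving $d_i$. Pick $c_0\in C$ with $\delta(c_0)\neq 0$; such a $c_0$ exists because the minimum polynomial of $\delta|_C$ is $t^p+a_1t$, not $t$. Evaluating the vanishing of the coefficient of $t^j$ at $c=c_0$ and assuming inductively that $d_{j+2}=\cdots=d_{p-1}=0$, all terms with $i-j\geq 2$ drop out, leaving $(j+1)\,d_{j+1}\,\delta(c_0)=0$. Since $D$ has no zero divisors and $(j+1)\in\{1,\dots,p-1\}$ is a unit mod $p$ for every $j\in\{0,\dots,p-2\}$, we conclude $d_{j+1}=0$. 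Starting from $j=p-2$ and working downward therefore kills every $d_i$ with $i\geq 1$, so $a(t)=d_0\in D$. The only delicate point is the unit condition on $(j+1)\bmod p$, which is precisely what makes the hypothesis $e=1$ so comfortable; for larger $e$ the higher-order contributions $\binom{i}{j}d_i\delta^{i-j}(c_0)$ with $i-j\geq 2$ would not peel off in this clean inductive fashion, which is presumably why the proposition is restricted to $g(t)=t^p+a_1t$.
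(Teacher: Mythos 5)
Your proof is correct and follows essentially the same route as the paper's: expand $t^ic=\sum_{j\le i}\binom{i}{j}\delta^{i-j}(c)t^j$, compare coefficients in $a(t)c=ca(t)$, and descend from the top degree using $\delta|_C\neq 0$ and the fact that the binomial coefficients $j+1$ are nonzero mod $p$. Your version merely makes explicit a few points the paper leaves implicit (no reduction mod $f$ is needed, and the unit condition on $j+1\bmod p$ that breaks down for $e>1$).
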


\begin{proof}
It remains to check that ${\rm Cent}_{(D,\delta, d)}(C)\subset D$.
For each $\sum_{i=0}^{p-1}b_it^i\in {\rm Cent}_{(D,\delta, d)}(C)$, we know that $c(\sum_{i=0}^{p-1}b_it^i)=(\sum_{i=0}^{p-1}b_it^i)c$ for all $c\in C$, which is equivalent to
$$\sum_{i=0}^{p-1}c b_it^i = \sum_{i=0}^{p-1}b_i(t^ic)= \sum_{i=0}^{p-1} \sum_{j=0}^{i}\binom{i}{j} b_i \delta^{i-j}(c) t^j$$
for all $c\in C$.
Comparing coefficients we get
\\ for $t^{p-2}$: $cb_{p-2}=b_{p-1}\binom{p-1}{p-2}\delta(c)+b_{p-2}c$, hence $b_{p-1}\binom{p-1}{p-2}\delta(c)=0$ for all $c\in C$. This implies $b_{p-1}=0$, since we assumed that $\delta|_C\not=0$.
\\ For $t^{p-3}$: $cb_{p-3}=b_{p-1}\binom{p-1}{p-3}\delta^2(c)+b_{p-2}\binom{p-1}{p-2}\delta^3(c)+b_{p-3}c$,
hence $b_{p-2}\binom{p-1}{p-2}\delta^3(c)=0$ for all $c\in C$. This implies $b_{p-2}=0$, since we assumed that $\delta|_C\not=0$.
\\
Continuing this way we see that $b_i=0$ for all $i>0$ and so $\sum_{i=0}^{p-1}b_it^i=b_0\in D$, yielding ${\rm Cent}_{(D,\delta, d)}(C)\subset D$.
\end{proof}

We conjecture that for all nonassociative algebras $(D,\delta,d)$, the centralizer of $C$ is $D$ but were only able to explicitly calculate this for the special case $e=1$. The above proof clearly relies on comparing coefficients and the general case is probably an equally straightforward  tedious calculation.

All the automorphisms of a nonassociative differential extension $(D, \delta, d)$ are canonically induced by ring automorphisms of the differential polynomial ring $D[t;\delta]$:

\bigskip
\begin{theorem} \label{thm:aut1}
Let $(D, \delta, d)$ be a   differential extension of dimension $p^en^2$
over $F$, i.e. $d\in D\setminus F$. Let $H \in {\rm Aut}((D, \delta, d))$.
\\ (i)  There exists some $\tau\in {\rm Aut}(D)$ and $e\in C^\times$, $c\in D$, which satisfy $\tau(z)c+\tau(\delta(z))=c\tau(z)+e\delta(\tau(z))$
for all $z\in D$, such that $H=H_{\tau,c,e}$ with
\begin{equation} \label{eqn:neccessaryII}
H_{\tau,c,e}( \sum_{i=0}^{p^e-1} d_i t^i )=\sum_{i=0}^{p^e-1}\tau( d_i ) (et+c)^i.
\end{equation}
\\ (ii) If $\delta$ and $\tau$ commute on $C$ then $H=H_{\tau,c,1}$ for some $c\in C$.
\\ (iii)
 All maps $H_{\tau,c,e}$ where $e\in C^\times$,  $c\in D$,  $\tau \in {\rm Aut}_{F}(D)$  satisfy $\tau(z)c+\tau(\delta(z))=c\tau(z)+e\delta(\tau(z))$
for all $z\in D$, and
$H(f)=f$,  are automorphisms of $(D, \delta, d)$, and induced by automorphisms of $D[t;\delta]$.
\end{theorem}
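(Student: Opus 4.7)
The plan is to reduce the classification of $F$-automorphisms of $(D,\delta,d)$ to that of $F$-automorphisms of $R=D[t;\delta]$ recalled in Section~\ref{sec:prel}, by lifting any given $H \in \mathrm{Aut}((D,\delta,d))$ to a ring endomorphism of $R$ and then forcing it to be an automorphism. First, since $d \in D \setminus F$ gives $\mathrm{Nuc}_l((D,\delta,d)) = D$ and algebra automorphisms preserve the nuclei, $H(D) = D$, so $\tau := H|_D \in \mathrm{Aut}_F(D)$. Let $p := H(t)$, viewed as the unique polynomial representative of degree strictly less than $p^e$ in $R$. Applying $H$ to the identity $tz - zt = \delta(z)$ (valid for $z \in D$ in both $R$ and $(D,\delta,d)$ since its result has degree zero) produces $p\,\tau(z) - \tau(z)\,p = \tau(\delta(z))$ in $(D,\delta,d)$, and since all terms involved have degree strictly less than $p^e$, the identity lifts verbatim to $R$. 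By the criterion recalled in Section~\ref{sec:prel}, this is precisely what is required for the assignment $\tilde H : R \to R$ with $\tilde H|_D = \tau$ and $\tilde H(t) = p$ to extend to a ring endomorphism of $R$.

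The main obstacle I foresee is showing $\deg p = 1$, so that $\tilde H$ is genuinely an automorphism of $R$. Writing $p = \sum_{i=0}^N c_i t^i$ with $c_N \ne 0$ and equating coefficients of $t^j$ in the commutator identity in $R$, the $t^N$ coefficient yields $[c_N, z] = 0$ for all $z \in D$, so $c_N \in Z(D) = C$; the $t^{N-1}$ coefficient yields $[c_{N-1}, z] = -N c_N \delta(z)$. Restricting to $z \in C$ annihilates the commutator, and the standing assumption $F \subsetneq C$ forces $\delta|_C \ne 0$, hence $N c_N = 0$ in $C$ and therefore $p \mid N$. To rule out $N \ge p$ I would continue the induction on $t^{N-2}, t^{N-3}, \ldots$: each equation expresses $[c_{N-k}, z]$ as a $C$-linear combination of $\delta(z), \delta^2(z), \ldots, \delta^k(z)$, which, when restricted to $z \in C$, yields a $C$-linear relation among $\delta|_C, \delta^2|_C, \ldots$ that must be compatible with $g(t) = t^{p^e} + a_1 t^{p^{e-1}} + \dots + a_e t$ being the minimum polynomial of $\delta|_C$. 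Exploiting the $p$-polynomial form of $g$ together with the characteristic-$p$ vanishing pattern of $\binom{N}{k}$ should produce the desired contradiction; this inductive arithmetic/combinatorial step is the technical heart of the argument.

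Once $N = 1$, so $p = et + c$ with $c \in D$ and $e = c_1 \in C^\times$, the $t^0$ coefficient of the commutator identity is exactly relation \eqref{main}, which together with the first paragraph establishes part~(i). The formula \eqref{eqn:neccessaryII} follows because $H$ is a ring homomorphism and, for linear $p$ and $i < p^e$, the iterated product $H(t) \cdot H(t) \cdots H(t)$ in the nonassociative algebra coincides with $(et+c)^i$ computed in $R$, no reduction modulo $f$ being triggered. For part~(ii), specialising \eqref{main} to $z \in C$ makes $[c, \tau(z)]$ vanish (as $\tau(z) \in C$ is central in $D$) and reduces the relation to $e\,\delta(\tau(z)) = \tau(\delta(z))$; the hypothesis $\delta\tau = \tau\delta$ on $C$ then forces $(e-1)\delta(\tau(z)) = 0$ and hence $e = 1$, and a further analysis of \eqref{main} on $D$ pins $c$ into $C$. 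For part~(iii), the stated hypotheses are exactly those recalled in Section~\ref{sec:prel} that make $H_{\tau,c,e}$ an $F$-automorphism of $R$, and the requirement $H_{\tau,c,e}(f) = f$ ensures $H_{\tau,c,e}(Rf) = Rf$, so $H_{\tau,c,e}$ descends canonically to an $F$-automorphism of $(D,\delta,d)$.
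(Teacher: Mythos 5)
Your general strategy (restrict $H$ to the left nucleus to get $\tau$, lift the relation $H(t)\tau(z)=\tau(z)H(t)+\tau(\delta(z))$ to $R=D[t;\delta]$ because no reduction mod $f$ is triggered, then force $\deg H(t)=1$ and read off \eqref{main} from the constant term) is exactly the paper's strategy, and parts (iii) and the setup of (i) are fine. The genuine gap is the step you yourself flag as ``the technical heart'': you only prove $p\mid N$ and then assert that continuing the coefficient comparison, using the vanishing pattern of $\binom{N}{k}$ mod $p$, ``should'' give a contradiction for $N\geq p$. It will not, and in fact the binomial vanishing works against you. Writing $H(t)=\sum_{i=0}^{N}b_it^i$, the coefficient of $t^j$ ($j\geq 1$) in the lifted relation reads $[b_j,w]=-\sum_{i>j}\binom{i}{j}b_i\,\delta^{i-j}(w)$ for all $w\in D$; restricting to $w\in C$ and using the $C$-independence of $1,\delta,\dots,\delta^{p^e-1}$ on $C$ only yields $\binom{i}{j}b_i=0$ for all $1\leq j<i$. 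This kills $b_i$ exactly for those $i\geq 2$ that are \emph{not} $p$-powers: if $i=p^s$ then $\binom{p^s}{j}\equiv 0 \pmod p$ for all $0<j<p^s$, so every one of your identities is vacuous and no contradiction arises. Worse, the surviving system is genuinely consistent: one then gets $b_j\in C$ for $j\geq1$, and since $c\,\delta^{p^s}$ is again a derivation for central $c$ in characteristic $p$, the $t^0$ identity merely says that $\tau\delta\tau^{-1}-\sum_{i\geq1}b_i\delta^i$ is the inner derivation induced by $b_0$, which is not absurd. So for $e\geq 2$ the commutator relation alone cannot rule out $H(t)$ having terms of degree $p,p^2,\dots,p^{e-1}$; your induction closes only in the case $e=1$ (where every admissible degree $2\leq N\leq p-1$ is prime to $p$), and you also leave the degenerate case $H(t)\in D$ untreated.

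To finish one must bring in more than that single relation — this is where the paper differs: it argues (admittedly briefly) that bijectivity of $H$ forces $H(t)$ to have degree one, and only then uses the coefficient comparison to extract $b_1\in C^\times$ and \eqref{main}. In other words, the missing ingredient is some use of surjectivity/multiplicativity of $H$ beyond $H(tz)=H(t)H(z)$ (for instance, control of $H$ on higher powers of $t$, or the requirement that $H$ carries $f$ to an associate of $f$), not further binomial combinatorics. Two smaller points: in (ii), after getting $e=1$, concluding $c\in C$ needs $\delta\tau=\tau\delta$ on all of $D$ (on $C$ the commutator $[c,\tau(z)]$ vanishes automatically, so no information about $c$ is obtained there) — your ``further analysis of \eqref{main} on $D$'' is doing real work and should be spelled out; and in (i) you should also record the leading-coefficient identity $b_1\tau(z)=\tau(z)b_1$, which is what places $e=b_1$ in $C^\times$ as the statement requires.
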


This substantially generalizes  \cite[Theorem 5.10]{CB}.

\begin{proof}
Let first $(D, \delta, d)$ be a  differential extension
over $F$ which is \emph{not} associative, i.e. $d\in D\setminus F$.
\\ (i) Let $H \in {\rm Aut}((D, \delta, d))$. We know that $H$ leaves the left nucleus invariant, so  $H|_D\in {\rm Aut}(D)$. Thus $H|_D = \tau$ for some $\tau\in {\rm Aut}(D)$. Since $H$ is bijective it must preserve the degree of $t$.
This can also be tediously  computed directly by putting
$H(t) = \sum_{i=0}^{p^e-1} b_i t^i$ for some $b_i \in D$, and then comparing the coefficients of
$$H(tz) = H(t)H(z) = \big( \sum_{i=0}^{p^e-1} b_i t^i \big) \tau(z) = \sum_{i=0}^{p^e-1} \sum_{j=0}^{i}\binom{i}{j} b_i \delta^{i-j}(\tau(z)) t^j,$$
and
$$H(tz) = H(zt+\delta(z)) = \tau(z) \sum_{i=0}^{p^e-1} b_i t^i +\tau(\delta(z))$$
for  $z\in D$, using that $F$ is strictly contained in $C$. We thus have $H(t) = b_1t+b_0$ for $b_0,b_1\in D$.
 Comparing the terms in $H(tz) = (b_1t+b_0)H(z)$, we obtain
$$\tau(z)b_0+\tau(\delta(z))=b_0\tau(z)+b_1\delta(\tau(z)) \text{ and } b_1\tau(z) =\tau(z)b_1$$
for all $z\in D$, so $b_1\in C^\times$. In particular, we have $\tau(\delta(z))=b_1\delta(\tau(z))$
for all $z\in C$.
\\ (ii) If $\delta|_C$ and $\tau|_C$ commute  and since  $\delta|_C\not=0,$ this implies that $b_1=1$.
This means $\tau(z)b_0=b_0\tau(z)$ for all $z\in D$, so that $b_0=c\in C$.
\\ (iii)
All maps $H_{\tau,c,e}$ where $d\in C^\times$ and  $\tau \in {\rm Aut}(D)$  satisfies $\tau(z)c+\tau(\delta(z))=c\tau(z)+e\delta(\tau(z))$
for all $z\in D$  are automorphisms of $R$ and thus induce isomorphisms between the algebras $(D,\delta,d)$ and $S_H(f)$.
If $H(f)=f$ then $H_{\tau,c,e}\in {\rm Aut}((D,\delta,d))$.
\\  Let now $(D, \delta, d)$ be associative. We know that every algebra isomorphism $H\in {\rm Aut}_{F}((D,\delta,d))$ leaves the centralizer ${\rm Cent}_{(D,\delta, d)}(C)=D$ invariant, so  $H|_D\in {\rm Aut}_{F}(D)$. Thus again $H|_D = \tau$ for some $\tau\in {\rm Aut}_{F}(D)$. Using the same proofs as in (i), (ii), (iii), the assertions also hold for an associative  differential extension.
\end{proof}

 Denote $H_{\tau,c,1}=H_{\tau,c}$. Then
 $$H_{id,-c}(\sum_{i=0}^{p^{e}-1} b_it^i)=\sum_{i=0}^{p^{e}-1} b_i(t-c)^i$$ is an automorphism of $(D,\delta,d)$ for all
$c\in C$ with $V(c)=0$,
and
$$\{c\in C\,|\, V(c)=0 \}=\{\delta(a)/a\,|\, a\in C \}\cong\{H_{id,-c}\,|\, c\in C \text{ with }
V(c)=0\}$$
is a subgroup of ${\rm Aut}((D,\delta,d) )$ via $c\mapsto H_{id,-c}$
  \cite[Propositions 8, 9]{P17}.

\bigskip
  \begin{proposition}\label{prop:3}
 (i) The cyclic subgroup
  $\langle H_{id,-1}\rangle $ of ${\rm Aut}((D,\delta,d))$ has order $p$ and leaves $D$ invariant.
\\  (ii)  If $c\in C$ with $V(c)=0$,  then $H_{id,-c}\in {\rm Aut}((D,\delta,d))$ has order $p$ and leaves $D$ invariant.
  \\ (iii) If $\delta\circ \tau=\tau\circ \delta$ and $\tau\in {\rm Aut}(D)$ has order $m$, then  $H_{\tau,1}\in {\rm Aut}(R)$ has order $m$.
 \end{proposition}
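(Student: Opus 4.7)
The plan for all three parts is to iterate the relevant automorphism of $R = D[t;\delta]$, using the ring-homomorphism property to reduce $H^k$ to its action on $D$ and on $t$.

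For (ii), and hence for the special case (i), I would observe that $H_{id,-c}$ fixes $D$ pointwise (since $\tau = id$) and sends $t \mapsto t-c$. A short induction then gives
\begin{equation*}
H_{id,-c}^k\Bigl(\sum_i b_i t^i\Bigr) = \sum_i b_i (t-kc)^i,
\end{equation*}
so $H_{id,-c}^k = id_R$ iff $kc = 0$ in $C$. With $\operatorname{char} C = p$ and $c \ne 0$, this is equivalent to $p \mid k$, giving order exactly $p$. The descent to $\operatorname{Aut}((D,\delta,d))$ is justified in the paragraph preceding the proposition: $V(c) = 0$ together with $f(t) - f(t-c) = V(c)$ yields $H_{id,-c}(f) = f$. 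Since every iterate restricts to the identity on $D$, the subalgebra $D$ is pointwise fixed, hence invariant. Part (i) is then immediate as the case $c = 1$.

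For (iii), the hypothesis $\delta \circ \tau = \tau \circ \delta$ is, by Section~\ref{subsec:diff}, precisely the condition making $H_{\tau,1}$ -- the ring automorphism of $R$ that extends $\tau$ on $D$ and fixes $t$ -- an element of $\operatorname{Aut}(R)$. By the same inductive ring-homomorphism argument, $H_{\tau,1}^k|_D = \tau^k$ and $H_{\tau,1}^k(t) = t$, so $H_{\tau,1}^k = id_R$ iff $\tau^k = id$, i.e.\ iff $m \mid k$. The order is therefore exactly $m$.

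No serious obstacle is expected; the one step needing a moment's care is verifying $H_{id,-c}(f) = f$ so that the automorphism descends from $R$ to $(D,\delta,d)$, and this is already available from the earlier discussion. Everything else reduces to characteristic-$p$ arithmetic in $C$ for (i) and (ii), and to the composition rule for $t$-preserving extensions of automorphisms of $D$ for (iii).
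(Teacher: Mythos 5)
Your argument is correct and is essentially the paper's own proof: the paper disposes of the proposition by recording exactly the iteration formula $(H_{id,-c})^j(\sum_i a_it^i)=\sum_i a_i(t-jc)^i$ that you derive, together with the remark that $V(1)=0$ iff $1=\delta(a)/a$ for some $a\in C$ (which is what places $H_{id,-1}$ in ${\rm Aut}((D,\delta,d))$ for part (i), a membership you, like the paper, take as given when specializing (ii) to $c=1$). One small caveat on (iii): under the paper's stated convention $H_{\tau,c}:=H_{\tau,c,1}$ the symbol $H_{\tau,1}$ would literally mean the map with $t\mapsto t+1$, whose order is ${\rm lcm}(m,p)$ rather than $m$; your reading of it as the $t$-fixing extension $H_{\tau,0,1}$ is the interpretation under which the stated order $m$ is correct, so your proof follows the evident intent rather than the literal notation.
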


\bigskip
  The proof is trivial, observing that
  $$(H_{id,-c})^j(\sum_{i=0}^m a_it^i)=\sum_{i=0}^m a_i(t-jc)^i$$
   for an integer $j\geq 1$, and that
   $V(1)=0$ if and only if $a=\delta(a)$ for some $a\in C$.

\bigskip
\begin{example}
  If $g(t)=t^p-t$ is the minimal polynomial of $\delta|_C$ and $f(t)=t^p-t-d\in D[t;\delta]$, we have  $H_{id,-a}(f(t))=f(t)$ for all $a\in C$ with $\delta^{p-1}(a)+a^p-a=0$, and $\langle H_{id,-1}\rangle $ is a cyclic subgroup of ${\rm Aut}((D,\delta,d))$ of order $p$ which leaves $D$ invariant \cite[Lemma 10]{P17}.
If   $f(t)=t^p-t-d\in C[t]$, then
$(C,\delta|_C,d)$ is a subalgebra of $(D,\delta,d)$ and $H_{id, -1}|_C$ also generates a cyclic subgroup of ${\rm Aut}((C,\delta|_C,d))$. If $f$ is additionally irreducible and $d\in F$, then this is why the associative division algebra $(D,\delta,d)$ is also called a \emph{cyclic extension of $D$ of degree $p$} by Amitsur \cite{Am2}.
\end{example}

\bigskip
\begin{theorem}\label{thm:main1}
Let $H_{\tau,-c}\in {\rm Aut}(R)$,
 such that $V_g(c)=d-\tau(d)$. Then $H_{\tau,-c}$
induces an $F$-automorphism of $(D,\delta,d)$. In particular, if $\tau$ has order $m$ and $d\in {\rm Fix}(\tau)$, then $H_{\tau,0}\in {\rm Aut}((D,\delta,d))$ generates a subgroup of order $m$.
\end{theorem}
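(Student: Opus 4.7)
The plan is to reduce the claim to the criterion $H_{\tau,-c}(f)=f$ from Theorem~\ref{thm:aut1}(iii); once that single equation is established, the general principle recalled at the end of Subsection~\ref{subsec:structure} immediately produces the canonically induced $F$-automorphism of the quotient $S_f=(D,\delta,d)$ (it is $F$-linear because $\tau\in {\rm Aut}_F(D)$ and the action on the coset of $t$ is built to be $F$-linear).

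To verify $H_{\tau,-c}(f)=f$, I would expand $f(t)=g(t)-d=t^{p^e}+a_1t^{p^{e-1}}+\dots+a_et-d$ and apply $H_{\tau,-c}$ coefficientwise. The $a_i$ lie in $F \cap {\rm Const}(\delta)$ and hence are fixed by $\tau$, so the identity $(t-c)^{p^j}=t^{p^j}-V_{p^j}(c)$ recalled in Subsection~\ref{subsec:diff} telescopes each summand, yielding
\[
H_{\tau,-c}(f)(t)=g(t)-\bigl(V_{p^e}(c)+a_1 V_{p^{e-1}}(c)+\dots+a_e c\bigr)-\tau(d)=g(t)-V_g(c)-\tau(d).
\]
The hypothesis $V_g(c)=d-\tau(d)$ is exactly what is needed for the right-hand side to collapse to $g(t)-d=f(t)$.

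For the final assertion, when $d\in {\rm Fix}(\tau)$ the admissible choice $c=0$ satisfies $V_g(0)=0=d-\tau(d)$, so $H_{\tau,0}\in {\rm Aut}((D,\delta,d))$. A short induction using the defining formula \eqref{eqn:neccessaryII} shows that $(H_{\tau,0})^k=H_{\tau^k,0}$, so $(H_{\tau,0})^k={\rm id}$ precisely when $\tau^k={\rm id}$, and therefore $H_{\tau,0}$ generates a cyclic subgroup of order exactly $m$.

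I do not anticipate a real obstacle: the argument is a one-line computation once the $p$-polynomial identities of Subsection~\ref{subsec:diff} are in place. The only mild bookkeeping point is to observe that the $a_i$ are fixed by $\tau$, which is what makes the expansion of $H_{\tau,-c}(g)$ reduce cleanly to $g(t)-V_g(c)$.
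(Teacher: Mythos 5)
Your proposal is correct and follows essentially the same route as the paper: one computes $H_{\tau,-c}(f)=g(t)-V_g(c)-\tau(d)$ via the identity $(t-c)^{p^j}=t^{p^j}-V_{p^j}(c)$ (the $a_i$ being fixed by $\tau$), so the hypothesis $V_g(c)=d-\tau(d)$ gives $H_{\tau,-c}(f)=f$ and hence an induced automorphism of $(D,\delta,d)$, with $c=0$ handling the case $d\in{\rm Fix}(\tau)$. Your extra remark that $(H_{\tau,0})^k=H_{\tau^k,0}$, pinning down the order as exactly $m$, is a small addition the paper leaves implicit.
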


\begin{proof}
We know that $H_{\tau,-c}\in {\rm Aut}(R)$ implies that
$\tau\in {\rm Aut}_F(D)$ and $\delta(\tau(z))-\tau(\delta(z))=c\tau(z)-\tau(z)c$ for all $z\in D$. Moreover,
 $H_{\tau,-c}(f)=(t-c)^{p^e}+a_1(t-c)^{p^{e-1}}+\dots+ a_e(t-c)-\tau(d)=g(t-c)-\tau(d)=
  g(t)-V_g(c)-\tau(d)=f(t)+d-V_g(c)-\tau(d)$.
 Hence if $V_g(c)=d-\tau(d)$, then $H_{\tau,-c}\in {\rm Aut}((D,\delta,d))$.
 In particular, $V_g(0)=0=d-\tau(d)$ is equivalent to $d=\tau(d)$.
  \end{proof}

\bigskip
\begin{corollary}
Choose  $c\in C$ and assume that $\delta\circ \tau=\tau\circ \delta$.
Suppose one of the following holds:
\\ (i)  $d\in {\rm Fix}(\tau)$ (e.g., $(D,\delta,d)$ is associative) and $V(c)=0$.
 \\ (ii)   $V_g(c)=d-\tau(d)$.
 \\ Then  $H_{\tau,-c}\in {\rm Aut}(R)$
induces an automorphism of $(D,\delta,d)$.
\end{corollary}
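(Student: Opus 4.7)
The plan is to reduce this corollary directly to Theorem~\ref{thm:main1}, which already handles the case where $H_{\tau,-c}$ is an $R$-automorphism satisfying the compatibility $V_g(c)=d-\tau(d)$. So the work splits into (a) verifying the hypothesis $H_{\tau,-c}\in{\rm Aut}(R)$ from the given data, and (b) checking that both scenarios (i) and (ii) furnish the equation $V_g(c)=d-\tau(d)$.

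For (a), recall from the preliminaries (equation (\ref{main}) with $e=1$) that $H_{\tau,-c}=H_{\tau,-c,1}$ lies in ${\rm Aut}(R)$ precisely when
\[
-c\,\tau(z)+\delta(\tau(z))=\tau(z)(-c)+\tau(\delta(z))\quad\text{for all }z\in D.
\]
Since $c\in C$ is central in $D$, we have $c\,\tau(z)=\tau(z)\,c$, and the condition collapses to $\delta\circ\tau=\tau\circ\delta$, which is exactly what is assumed. Thus $H_{\tau,-c}$ is indeed an $F$-automorphism of $R$ in both scenarios.

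For (b), in case (ii) the equation $V_g(c)=d-\tau(d)$ is part of the hypothesis, so Theorem~\ref{thm:main1} applies verbatim and gives that $H_{\tau,-c}$ descends to an automorphism of $(D,\delta,d)$. In case (i), the assumption $d\in{\rm Fix}(\tau)$ gives $\tau(d)=d$, hence $d-\tau(d)=0$; and the assumption $V(c)=V_g(c)=0$ (these are the same by the definition of $V$ in Subsection~\ref{subsec:diff}) yields $V_g(c)=0=d-\tau(d)$. So (i) is a special case of (ii), and Theorem~\ref{thm:main1} again produces an automorphism of $(D,\delta,d)$.

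There is no real obstacle here: the only nontrivial check is that the central location of $c$ in $D$ makes the automorphism criterion (\ref{main}) collapse to the commutation of $\delta$ and $\tau$, after which Theorem~\ref{thm:main1} does the rest. The corollary is essentially a user-friendly repackaging that identifies two concrete situations in which the compatibility condition $V_g(c)=d-\tau(d)$ of Theorem~\ref{thm:main1} is automatically satisfied.
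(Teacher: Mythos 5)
Your proposal is correct and follows essentially the same route as the paper: verify that $c\in C$ together with $\delta\circ\tau=\tau\circ\delta$ collapses condition (\ref{main}) so that $H_{\tau,-c}\in{\rm Aut}(R)$, then use the criterion $V_g(c)=d-\tau(d)$ (with case (i) being the special instance $\tau(d)=d$, $V_g(c)=0$). The only cosmetic difference is that you cite Theorem~\ref{thm:main1} directly, whereas the paper re-runs the computation $H_{\tau,-c}(f)=f(t)+d-V_g(c)-\tau(d)$ inline.
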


\begin{proof}
By assumption, we have
 $\delta(\tau(z))-\tau(\delta(z))=c\tau(z)-\tau(z)c=0$ for all $z\in D$, that is $H_{\tau,-c}\in {\rm Aut}(R)$.
We also know $(D,\delta, d)\cong S_{H(f)}$ and
$$H_{\tau,-c}(f)=(t-c)^{p^e}+a_1(t-c)^{p^{e-1}}+\dots+ a_e(t-c)-\tau(d)$$
$$=g(t-c)-\tau(d)=g(t)-V_g(c)-\tau(d)=f(t)+d-V_g(c)-\tau(d).$$
 Thus if $V(c)=d-\tau(d)$ then $H_{\tau,-c}$ is an isomorphism.
The assertions now follow.
\end{proof}

\bigskip
\begin{corollary}\label{cor:7}
 Assume that $\tau\in {\rm Aut}(D)$ such that $\delta\circ \tau=\tau\circ \delta$ and $d\in {\rm Fix}(\tau)$.
  \\ (i) We have $H_{\tau,0}\in {\rm Aut}((D,\delta,d))$, i.e
$$\{H_{\tau,0}\,|\, \tau\in {\rm Aut}(D) \text{ such that } \delta\circ \tau=\tau\circ \delta \text{ and } d\in {\rm Fix}(\tau) \}$$
 is a subgroup of ${\rm Aut}_F((D,\delta,d))$.
In particular, if $(D,\delta,d)$ is associative then
$$\{H_{\tau,0}\,|\, \tau\in {\rm Aut}(D) \text{ such that } \delta\circ \tau=\tau\circ \delta  \}$$
 is a subgroup of ${\rm Aut}((D,\delta,d))$.
\\ (ii)  For all $c\in C$, $id_D\in {\rm Aut}(D)$ extends to an automorphism $H=H_{id,-c}\in {\rm Aut}((D,\delta,d))$.
\end{corollary}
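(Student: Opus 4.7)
The plan is to derive Corollary~\ref{cor:7} as a direct specialization of Theorem~\ref{thm:main1}, together with the criterion from the preliminaries that characterizes when $H_{\tau,0,1}\in {\rm Aut}(R)$. The content of the corollary is essentially bookkeeping: part (i) is the case $c=0$ and part (ii) is the case $\tau=\mathrm{id}_D$ of Theorem~\ref{thm:main1}, and the extra assertions concern closure under composition and the associative simplification.

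For part (i), I would first invoke the criterion recalled in Section~\ref{subsec:structure}: since $\delta\circ\tau=\tau\circ\delta$, the map $H_{\tau,0}=H_{\tau,0,1}$ belongs to ${\rm Aut}(R)$. With $c=0$ one has $V_g(0)=0$, and the hypothesis $d\in {\rm Fix}(\tau)$ gives $d-\tau(d)=0$. Thus the hypothesis $V_g(c)=d-\tau(d)$ of Theorem~\ref{thm:main1} is satisfied, and the theorem yields $H_{\tau,0}\in {\rm Aut}_F((D,\delta,d))$. To upgrade this to a subgroup, I would verify the routine identities $H_{\tau,0}\circ H_{\sigma,0}=H_{\tau\sigma,0}$ and $H_{\tau,0}^{-1}=H_{\tau^{-1},0}$ by evaluating on $\sum d_i t^i$ via formula~\eqref{eqn:neccessaryII}, and then observe that both defining conditions are stable under composition and inversion: if $\tau,\sigma$ commute with $\delta$, so do $\tau\sigma$ and $\tau^{-1}$; if $\tau,\sigma$ fix $d$, so do $\tau\sigma$ and $\tau^{-1}$. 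The associative refinement comes for free: if $(D,\delta,d)$ is associative then $d\in F$, and any $F$-automorphism $\tau$ of $D$ automatically fixes $F$, so the condition $d\in {\rm Fix}(\tau)$ drops out.

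For part (ii), I would specialize to $\tau=\mathrm{id}_D$. Then $\delta\circ\mathrm{id}=\mathrm{id}\circ\delta$ and $d\in {\rm Fix}(\mathrm{id})$ hold trivially, and for any $c\in C$ the criterion of Section~\ref{subsec:structure} gives $H_{\mathrm{id},-c}\in {\rm Aut}(R)$ (since $-c\in C$ commutes with all elements of $D$). Applying Theorem~\ref{thm:main1} with $\tau=\mathrm{id}$ and this $c$ then descends $H_{\mathrm{id},-c}$ to an automorphism of $(D,\delta,d)$ whenever the compatibility $V_g(c)=d-d=0$ holds, matching the paragraph preceding Proposition~\ref{prop:3}. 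This is precisely the content of the extension statement for $\mathrm{id}_D$.

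I do not expect any real obstacle here: every ingredient is already in place from Theorem~\ref{thm:main1} and the prerequisites in Section~\ref{subsec:structure}. The only point requiring a little care is the group-theoretic bookkeeping in (i), namely checking that the composition formula for $H_{\tau,0}$ really is $H_{\tau\sigma,0}$ (which amounts to noting that the automorphism $H_{\tau,0}$ fixes $t$ and acts by $\tau$ on coefficients, so the two operations commute cleanly), and that the stabilizer conditions on $\tau$ are closed under the group operations of ${\rm Aut}(D)$.
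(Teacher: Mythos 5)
Your proposal is correct and follows essentially the same route as the paper: both parts are read off from Theorem \ref{thm:main1} (with $c=0$ plus the routine closure bookkeeping for (i), and $\tau=\mathrm{id}_D$ for (ii)), using the criterion $\delta\circ\tau=\tau\circ\delta$ for $H_{\tau,0}\in{\rm Aut}(R)$. If anything you are more careful than the paper in part (ii), where the paper simply asserts that the condition $V(c)=d-\tau(d)$ ``is satisfied here,'' while you correctly flag that descending $H_{\mathrm{id},-c}$ to an automorphism of $(D,\delta,d)$ requires $V_g(c)=0$, i.e.\ $c$ a logarithmic derivative $\delta(a)/a$, which is the hypothesis the statement should carry (as in the discussion preceding Proposition \ref{prop:3}).
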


\begin{proof}
(i) The first statement follows directly from Theorem \ref{thm:main1} using the map
  $\tau \mapsto H_{\tau,0}$.
 If $(D,\delta,d))$ is associative then $d\in F\subset {\rm Fix}(\tau)$ so $H_{\tau,0}\in {\rm Aut}((D,\delta,d))$ for all
  $\tau \in {\rm Aut}((D,\delta,d))$ hence the second assertion is clear, too.
  \\ (ii)  We know that $H_{id,-c}\in {\rm Aut}(R)$ canonically induces an isomorphism $H_{id,-c}\in {\rm Aut}((D,\delta,d))$ if
$V(c)=d-\tau(d)$, which is satisfied here.
\end{proof}

For all invertible $a\in {\rm Nuc}((D,\delta,d))$, the maps $G_a:(D,\delta,d)\rightarrow (D,\delta,d)$,
$$G_a(h)=(a^{-1}h)a,$$
 are inner automorphisms of
$(D,\delta,d)$ \cite[Corollary 5.8]{CB} and form a subgroup of ${\rm Aut}_F((D,\delta,d) )$. We know that  ${\rm Nuc}((D,\delta,d))\subset D$.
It is clear that for $a\in {\rm Nuc}((D,\delta,d))^\times$, $G_a|_D=i_a\in {\rm Aut}_C(D)\subset  {\rm Aut}_F(D)$ is inner, too.

\bigskip
\begin{theorem}\label{thm:main inner}
Let $a\in  {\rm Nuc}((D,\delta,d))^\times\subset D^\times$ and $G_a\in {\rm Aut}_F((D,\delta,d))$,
then
$$G_a=H_{i_a, a^{-1}\delta(a)}$$
 and
$\{H_{i_a, a^{-1}\delta(a)}\,|\, a\in {\rm Nuc}((D,\delta,d))^\times\}$  is a  subgroup of ${\rm Aut}_F((D,\delta,d) )$ of inner automorphisms.
In particular,  $\{a^{-1}\delta(a) \,|\, a\in C\cap  {\rm Nuc}((D,\delta,d))^\times \}$
is a subgroup of  $\{H_{i_a, a^{-1}\delta(a)}\,|\, a\in {\rm Nuc}((D,\delta,d))^\times\}$.
\end{theorem}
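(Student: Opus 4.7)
The plan is to pin down $G_a$ on the two natural sets of generators of $(D,\delta,d)$---the subalgebra $D$ and the element $t$---and then use that $G_a$ is an algebra automorphism to identify it with $H_{i_a,a^{-1}\delta(a)}$. First, for $b\in D$ I compute $G_a(b)=a^{-1}ba=i_a(b)$, which is the usual inner automorphism of $D$ attached to $a$; here I use that $a\in D^\times$ since ${\rm Nuc}((D,\delta,d))\subseteq D$. Second, applying the defining relation $ta=at+\delta(a)$ of $R=D[t;\delta]$---which still holds in the quotient because $at+\delta(a)$ has degree $1<p^e$---the nuclearity of $a$ lets me bracket freely to obtain
\[ G_a(t)=a^{-1}(ta)=a^{-1}(at+\delta(a))=t+a^{-1}\delta(a). \]

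Next I extend multiplicatively. Since $G_a$ is an algebra automorphism (the content of \cite[Corollary 5.8]{CB} once $a$ lies in the nucleus), I can iterate to get $G_a(t^i)=G_a(t)^i=(t+a^{-1}\delta(a))^i$, valid for $0\le i\le p^e-1$ because such a power has degree $i<p^e$ in $R$ and hence coincides with its image in the quotient, so no reduction modulo $f$ ever intervenes. Combining with additivity and the formula for $G_a|_D$ I obtain
\[ G_a\Bigl(\sum_{i=0}^{p^e-1} b_i t^i\Bigr)=\sum_{i=0}^{p^e-1} i_a(b_i)\bigl(t+a^{-1}\delta(a)\bigr)^i, \]
which is precisely $H_{i_a,a^{-1}\delta(a)}$ in the notation of Theorem \ref{thm:aut1}. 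As a by-product this forces the compatibility relation \eqref{main} with $\tau=i_a$, $c=a^{-1}\delta(a)$, $e=1$; that identity can of course also be verified directly by a short Leibniz-rule calculation using $\delta(a^{-1})=-a^{-1}\delta(a)a^{-1}$.

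For the subgroup assertion I invoke the already-established fact (\cite{W09}, \cite[Proposition 5.7]{CB}) that $\{G_a\mid a\in{\rm Nuc}((D,\delta,d))^\times\}$ is a subgroup of ${\rm Aut}_F((D,\delta,d))$; the identification above transports it verbatim to $\{H_{i_a,a^{-1}\delta(a)}\mid a\in{\rm Nuc}^\times\}$. Restricting to $a\in C\cap {\rm Nuc}^\times$, the inner map $i_a$ on $D$ degenerates to ${\rm id}_D$ (since $C\subseteq C(D)$), so $G_a=H_{{\rm id},a^{-1}\delta(a)}$ is parameterised purely by the logarithmic derivative $a^{-1}\delta(a)\in C$. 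Additivity of $V$ and the recalled fact $V(c)=0\Leftrightarrow c=\delta(a)/a$ (from \cite{J37}) show that the set of such logarithmic derivatives forms an additive subgroup of $C$, and the map $a\mapsto H_{{\rm id},a^{-1}\delta(a)}$ then realises it as a subgroup of the inner automorphism group.

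The main obstacle is really just the bookkeeping in the opening step: one has to check that the formal manipulation $G_a(t)=a^{-1}ta=a^{-1}(at+\delta(a))$ is legitimate inside the \emph{nonassociative} algebra $(D,\delta,d)$, which is exactly where the nuclear hypothesis on $a$ is essential, and one has to keep track of degrees so that the subsequent powers $(t+a^{-1}\delta(a))^i$ never need to be reduced modulo $f$. Once these two points are cleared, everything else is either formal or a direct appeal to the quoted subgroup and logarithmic-derivative results.
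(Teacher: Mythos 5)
Your proposal is correct. The core computations coincide with the paper's: you identify $G_a|_D=i_a$ and $G_a(t)=a^{-1}(ta)=t+a^{-1}\delta(a)$ (using nuclearity of $a$ to re-bracket), and you transport the known subgroup structure of $\{G_a\mid a\in{\rm Nuc}((D,\delta,d))^\times\}$ to $\{H_{i_a,a^{-1}\delta(a)}\}$. Where you diverge is in justifying why these two values pin down $G_a$: the paper simply invokes Theorem \ref{thm:aut1} to write $G_a=H_{\tau,c,e}$ and then reads off $\tau$, $c$, $e$, whereas you bypass that classification entirely and reconstruct $G_a$ on all of $(D,\delta,d)$ by multiplicativity, with the degree bookkeeping ($t^i$ and $(t+a^{-1}\delta(a))^i$ for $i\le p^e-1$ never require reduction modulo $f$, so all bracketings agree with the computation in $R$) doing the work. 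Your route is slightly longer but self-contained and makes explicit exactly where nuclearity and the degree bound enter; the paper's is shorter at the cost of leaning on the earlier structure theorem. You also spell out the final assertion about $\{a^{-1}\delta(a)\mid a\in C\cap{\rm Nuc}^\times\}$ via additivity of logarithmic derivatives and the correspondence $c\mapsto H_{id,c}$, which the paper dismisses as ``clear''; that is a welcome amplification and consistent with the subgroup statement $\{\delta(a)/a\mid a\in C\}\cong\{H_{id,-c}\mid V(c)=0\}$ recalled earlier in the paper.
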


\begin{proof}
Since $G_a\in {\rm Aut}_F((D,\delta,d))$, $G_a=H_{\tau,c,e}$ for suitable $\tau\in {\rm Aut}_{F}(D)$ and $c\in D$, $e\in C^\times$, which immediately implies that $\tau=G_a|_D$, $\tau(x)=a^{-1}xa=i_a(x)$. Now since $G_a(t)=a^{-1}ta=t+a^{-1}\delta(a)$ we get $G_a=H_{i_a, a^{-1}\delta(a)}$.
This implies  that $\{H_{i_a, a^{-1}\delta(a)}\,|\, a\in D^\times\}$
is a subgroup of ${\rm Aut}_F((D,\delta,d) )$ of inner automorphisms. The second assertion is then clear.
\end{proof}

When $(D,\delta,d)$ is associative,
 obviously every  $H_{\tau,c,e}\in {\rm Aut}_F((D,\delta,d))$ is inner.

\section{The automorphisms of nonassociative  differential extensions of $K$} \label{sec:3}

What happens when $D=C$? Most of the results of the previous section apply:

Let $K$ be a field of characteristic $p$ together with an algebraic derivation
of $K$ of degree $p^e$ with minimum polynomial
$g(t)=t^{p^e}+a_1t^{p^{e-1}}+\dots+ a_et\in F[t]$, and $F={\rm Const}(\delta)$. Put $R=K[t;\delta]$. Then $K$  is a purely inseparable extension of $F$ of exponent one, $K=F(u_1, \dots,u_e)=F(u_1)\otimes_F\dots \otimes_F F(u_e)$, $u_i^p=a_i\in F$, $[K:F]=p^e$, and $K^p\subset F\subset K$.  The center of $R$ is $F[z]$ with $z=g(t)-d_0$, $d_0\in F$.
Let $f(t)=g(t)-d\in K[t;\delta]$, then the unital $F$-algebra
$$(K,\delta,d)=K[t;\delta]/K[t;\delta] f(t)$$
has dimension $p^{2e}$ and is called a \emph{nonassociative differential extension} of $K$. The algebra
$(K,\delta,d)$ is associative if and only if $d\in F$, and a division algebra if and only if $f(t)$ is irreducible.
For $d\in K\setminus F$ it has left and middle nucleus $K$ and the right nucleus contains $K$, thus
${\rm Nuc}((K,\delta,d))=K.$

For $d\in F$, $(K,\delta,d)$
is an associative central simple $F$-algebra \cite[p.~23]{J96} with maximal subfield $K$ and contains the subring $F[t]/(f(t))$, which is a maximal subfield of $(K,\delta,d)$, if $f(t)$ is irreducible in $F[t]$.

For associative algebras $(K,\delta,d)$, the centralizer of $K$
is $K$  \cite[Theorem 3.1]{Hoe}, and  $K\subset {\rm Cent}_{(K,\delta, d)}(K)$
also in the nonassociative case. Proposition \ref{prop:centralizer}, the centralizer ${\rm Cent}_{(K,\delta, d)}(K)$ of $K$  in $(K,\delta, d)$ is $K$ also when $d\in K\setminus F$, if $\delta$ has the minimum polynomial $g(t)=t^p+a_1t$ (and probably also in the general case for any $g(t)$).
Analogously  to Theorem \ref{thm:aut1} we have:

\bigskip
\begin{theorem} \label{thm:aut1K}
Let $(K, \delta, d)$ be a nonassociative   differential  extension of the field $K$ of dimension $p^{2e}$
over $F$.
Let $H \in {\rm Aut}((K, \delta, d))$.
\\ (i)  There exists some $\tau\in {\rm Aut}(K)$ which satisfies $\tau(\delta(z))=e\delta(\tau(z))$
for all $z\in K$ and $e\in K^\times$, such that $H=H_{\tau,c,e}$.
\\ (ii) If $\delta$ and $\tau$ commute, then $H=H_{\tau,c,1}$ for some $c\in K$.
\\ (iii)
 All maps $H_{\tau,c,e}$ where $e\in K^\times$, $\tau \in {\rm Aut}(K)$  satisfies $\tau(\delta(z))=e\delta(\tau(z))$
for all $z\in K$, and
$H(f)=f$,  are automorphisms of $(K, \delta, d)$, and induced by automorphisms of $K[t;\delta]$.
\end{theorem}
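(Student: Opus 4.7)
The plan is to mirror the proof of Theorem~\ref{thm:aut1} virtually verbatim, with commutativity of $K$ flattening two places: the constraint from the ``$b_1\tau(z)=\tau(z)b_1$'' coefficient becomes vacuous, and the compatibility relation~(\ref{main}) collapses (since $c\tau(z)=\tau(z)c$) into the single identity $\tau(\delta(z))=e\delta(\tau(z))$.

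For part~(i), the remarks preceding the theorem give that the nucleus of $(K,\delta,d)$ equals $K$, so any automorphism $H$ restricts to an automorphism $\tau=H|_K\in{\rm Aut}(K)$. Next I would write $H(t)=\sum_{i=0}^{p^e-1}b_it^i$ and, using $t^iz=\sum_{j=0}^{i}\binom{i}{j}\delta^{i-j}(z)t^j$, expand the identity $H(tz)=H(t)H(z)$ to obtain
$$\tau(z)\sum_{i=0}^{p^e-1}b_it^i+\tau(\delta(z))\;=\;\sum_{i=0}^{p^e-1}\sum_{j=0}^{i}\binom{i}{j}b_i\,\delta^{i-j}(\tau(z))\,t^j$$
for every $z\in K$. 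Comparing the $t^k$-coefficients for $1\leq k\leq p^e-2$ yields $\sum_{i>k}\binom{i}{k}b_i\delta^{i-k}(\tau(z))=0$ for all $z\in K$, and bijectivity of $H$ then forces $H(t)$ to have $t$-degree exactly one (the same assertion made in the proof of Theorem~\ref{thm:aut1}). Hence $H(t)=et+c$ with $e=b_1\in K^\times$ and $c=b_0\in K$, and the $t^0$-coefficient produces $\tau(\delta(z))=e\delta(\tau(z))$, the commutative form of~(\ref{main}).

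Part~(ii) is immediate: under $\delta\circ\tau=\tau\circ\delta$ the identity $\tau(\delta(z))=e\delta(\tau(z))$ becomes $(e-1)\delta(\tau(z))=0$, which together with $\delta|_K\neq 0$ and $\tau\in{\rm Aut}(K)$ forces $e=1$. For part~(iii), any $H_{\tau,c,e}$ with $e\in K^\times$ and $\tau\in{\rm Aut}(K)$ satisfying $\tau(\delta(z))=e\delta(\tau(z))$ is a ring automorphism of $K[t;\delta]$ by the criterion stated in Section~\ref{subsec:diff}; since every automorphism of $K[t;\delta]$ canonically induces an isomorphism $S_f\cong S_{H(f)}$, the hypothesis $H(f)=f$ makes $H_{\tau,c,e}$ restrict to an automorphism of $(K,\delta,d)$.

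The main obstacle, as in Theorem~\ref{thm:aut1}, is the ``$H(t)$ has $t$-degree one'' step. In the commutative setting the coefficient relations simplify, but they do not immediately kill $b_{p^m}$ for $1\leq m<e$ because $\binom{p^m}{k}\equiv 0\pmod{p}$ for all $1\leq k<p^m$; ruling out these $p$-power indices rigorously requires exploiting the bijectivity of $H$ (equivalently, that $\{H(t)^j\}_{j=0}^{p^e-1}$ must form a $\tau$-twisted $K$-basis of $(K,\delta,d)$), which is exactly the ``tedious'' check the authors reference in Theorem~\ref{thm:aut1}.
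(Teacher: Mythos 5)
Your outline reproduces the paper's proof of Theorem \ref{thm:aut1K} essentially verbatim in the genuinely nonassociative case $d\in K\setminus F$: restrict $H$ to the (left) nucleus $K$ to obtain $\tau$, compare coefficients in $H(tz)=H(t)H(z)$ to force $H(t)=et+c$ together with $\tau(\delta(z))=e\delta(\tau(z))$, conclude $e=1$ when $\delta$ and $\tau$ commute, and for (iii) use that such an $H_{\tau,c,e}$ is an automorphism of $K[t;\delta]$ inducing $S_f\cong S_{H(f)}$, so that $H(f)=f$ yields an automorphism of $(K,\delta,d)$. Parts (ii) and (iii) are fine as written. The one genuine omission is the associative case $d\in F$: in the paper's terminology a ``nonassociative differential extension'' need not be nonassociative, and the paper's proof treats this case separately. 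Your step (i) rests on ${\rm Nuc}((K,\delta,d))=K$, which the remarks preceding the theorem only provide for $d\in K\setminus F$; when $d\in F$ the algebra is associative and central simple over $F$, its nucleus is the whole algebra, and the restriction-to-the-nucleus argument gives nothing. The paper instead argues there that $H$ preserves the centralizer ${\rm Cent}_{(K,\delta,d)}(K)=K$ (via Proposition \ref{prop:centralizer} and \cite[Theorem 3.1]{Hoe}) to get $H|_K\in{\rm Aut}(K)$. So either restrict your statement to $d\in K\setminus F$ or add this second case.

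Concerning the step you flag yourself: your observation that coefficient comparison alone does not eliminate $b_{p^m}$ for $1\le m<e$, since $\binom{p^m}{j}\equiv 0\ (\mathrm{mod}\ p)$ for $0<j<p^m$, is accurate, and you resolve it exactly as the paper does, namely by appealing to the assertion that bijectivity of $H$ forces $\deg H(t)=1$. Relative to the paper's own level of detail this is therefore not a new gap (for $e=1$ the comparison alone does suffice, since $\binom{n}{n-1}=n\not\equiv 0$ for $1\le n\le p-1$), but be aware that neither you nor the paper actually writes out why bijectivity excludes $\deg H(t)=p^m$ when $e>1$; if you want a self-contained argument, that is the point that still needs to be supplied.
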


\begin{proof}
Let first $(K, \delta, d)$ be a   differential extension of $K$
over $F$ which is not associative, i.e. $d\in K\setminus F$.
\\ (i)  We know that $H$ leaves the left nucleus invariant, so $H|_K = \tau$ for some $\tau\in {\rm Aut}_{F}(K)$.  $H$ is bijective so must preserve the degree of $t$.
As in the proof of  Theorem \ref{thm:aut1}, we obtain $H(t)=(et+c)$ for $e\in K^\times$, $c\in K$ with
$\tau(\delta(z))=e\delta(\tau(z))$
for all $z\in K$.
\\ (ii) If $\delta$ and $\tau$ commute this implies that $e=1$.
\\ (iii)
All maps $H_{\tau,c,e}$ where $d\in C^\times$ and  $\tau \in {\rm Aut}_{F}(K)$  satisfies $\tau(\delta(z))=e\delta(\tau(z))$
for all $z\in D$  are automorphisms of $R$ and thus induce isomorphisms between the algebras $(K,\delta,d)$ and $S_H(f)$.
If $H(f)=f$ then $H_{\tau,c,e}\in {\rm Aut}((K, \delta, d))$.
\\  Let now $(K, \delta, d)$ be associative. We know that every algebra isomorphism $H\in {\rm Aut}((K,\delta,d))$ leaves the centralizer ${\rm Cent}_{(K,\delta, d)}(K)=K$ invariant, so  $H|_K\in {\rm Aut}(K)$. Thus again
the assertions in (i), (ii), (iii)  hold for an associative  differential extension $(K,\delta,d)$ using analogous proofs.
\end{proof}

From Proposition \ref{prop:3} we obtain:

\bigskip
  \begin{corollary}
 (i) The cyclic group $\langle H_{id,-1}\rangle $ is a subgroup of ${\rm Aut}((K,\delta,d))$ of order $p$ which leaves $K$ invariant.
\\  (ii)  If $c\in K$ with $V(c)=0$,
 then $H_{id,-c}\in {\rm Aut}((K,\delta,d))$ has order $p$.
 \\ (iii) If $\delta\circ \tau=\tau\circ \delta$ and $\tau\in {\rm Aut}(K)$ has order $m$, then  $H_{\tau,1}\in {\rm Aut}(R)$ has order $m$.
 \end{corollary}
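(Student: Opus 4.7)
The plan is to observe that the corollary is the specialization of Proposition \ref{prop:3} to the case $D = C = K$, so the arguments there apply verbatim. I would import the two key ingredients used in the proof of that proposition: first, the explicit iteration formula
\[
(H_{id,-c})^{j}\Bigl(\sum_{i=0}^{m} a_{i} t^{i}\Bigr) = \sum_{i=0}^{m} a_{i} (t-jc)^{i},
\]
valid for all $j \geq 1$ by a straightforward induction on $j$; and second, the computation $H_{id,-c}(f) = g(t-c) - d = g(t) - V_g(c) - d$, which equals $f$ precisely when $V_g(c) = 0$. Both facts depend only on the structure of the differential polynomial ring $K[t;\delta]$ and make no use of noncommutativity in the coefficient ring, so they transfer to the present setting without change.

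With these in hand, (ii) follows at once: for $c \in K$ with $V(c) = 0$, the second identity shows $H_{id,-c}(f) = f$, so $H_{id,-c}$ descends to an element of ${\rm Aut}((K,\delta,d))$; the iteration formula gives $(H_{id,-c})^{p}(t) = t - pc = t$ in characteristic $p$, so the order divides $p$, while $c \neq 0$ forces $H_{id,-c}(t) = t - c \neq t$, whence the order is exactly $p$. Part (i) is then the special case $c = 1$, and since $H_{id,-c}|_{K}$ is the identity on $K$, the subgroup $\langle H_{id,-1}\rangle$ leaves $K$ pointwise fixed and in particular invariant.

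For (iii), I would invoke the criterion from Section \ref{subsec:diff}, which in the commutative setting $D = K$ (where equation (\ref{main}) collapses to $\delta \circ \tau = \tau \circ \delta$) gives $H_{\tau,1} \in {\rm Aut}(R)$. This automorphism restricts to $\tau$ on $K$, so its iterates restrict to $\tau^{j}$ on $K$; the order in ${\rm Aut}(R)$ is therefore at least $m$, and a matching upper bound follows from the explicit formula for the iterated action on $t$ together with the hypothesis that $\tau$ has order $m$. No genuine obstacle arises; the only bookkeeping care needed is to respect the notational convention $H_{\tau,c} = H_{\tau,c,1}$ so as not to confuse the subscripts.
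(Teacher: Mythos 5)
Your route is the paper's own: the corollary is presented there as the direct specialization of Proposition \ref{prop:3} to $D=C=K$, and the proof of that proposition consists of exactly the two observations you import (the iteration formula for $H_{id,-c}$ and the computation $H_{\tau,-c}(f)=g(t)-V_g(c)-\tau(d)$ as in Theorem \ref{thm:main1}). Your argument for (ii) is complete and matches the paper, including the remark that $c\neq 0$ is needed for the order to be exactly $p$.

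Two concrete points, however. For (i), it is not literally ``the special case $c=1$'' of (ii): to apply (ii) you need $V(1)=0$, equivalently $g(1)=0$, equivalently $\delta(a)=a$ for some $a\in K^\times$ --- precisely the condition the paper flags in its proof note to Proposition \ref{prop:3}. It holds for the motivating minimal polynomial $g(t)=t^p-t$, but not for an arbitrary $g$ (e.g. for $\delta=d/du$ on $K=\mathbb{F}_p(u)$ one has $g(t)=t^p$, $V(1)=1$ and $H_{id,-1}(f)=f-1\neq f$), so you must either record this hypothesis or verify it in the intended setting rather than pass over it silently. For (iii), your proposed upper bound breaks under the very convention you invoke: with $H_{\tau,c}=H_{\tau,c,1}$, the symbol $H_{\tau,1}$ denotes the map sending $t\mapsto t+1$ and acting as $\tau$ on coefficients, and then $(H_{\tau,1})^m(t)=t+m$, so its order in ${\rm Aut}(R)$ is ${\rm lcm}(p,m)$ rather than $m$ (take $\tau=id$, $m=1$). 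The order-$m$ claim is only valid for the reading $H_{\tau,1}=H_{\tau,0,1}$, i.e. the map fixing $t$ (as in $H_{\tau,0}$ of Theorem \ref{thm:main1} and $H_{\tau,0,1}$ in the introduction), in which case it is immediate from the restriction to $K$; your sketch needs to commit to that reading instead of deferring it as ``bookkeeping'', since with $c=1$ the claimed matching upper bound simply fails.
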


\bigskip
Theorem \ref{thm:main1} for $D=C$ yields:

\bigskip
\begin{theorem}\label{thm:main2}
Let $H_{\tau,-c}\in {\rm Aut}_F(R)$,
 such that $V_g(c)=d-\tau(d)$. Then $H_{\tau,-c}$
induces an automorphism of $(K,\delta,d)$. In particular, if $\tau$ has order $m$ and $d\in {\rm Fix}(\tau)$, then $H_{\tau,0}\in {\rm Aut}_F((K,\delta,d))$ generates a subgroup of order $m$.
\end{theorem}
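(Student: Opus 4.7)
The plan is to imitate the proof of Theorem \ref{thm:main1} verbatim, since Theorem \ref{thm:main2} is just the specialization $D=C=K$ as the preceding sentence in the paper indicates; the argument in fact simplifies because $K$ is commutative. The strategy is: any $F$-automorphism $H$ of $R=K[t;\delta]$ canonically induces an isomorphism of $F$-algebras $S_f\cong S_{H(f)}$ (Section \ref{subsec:structure}), so to produce an $F$-automorphism of $(K,\delta,d)=S_f$ it is enough to verify that $H_{\tau,-c}(f)=f$, as then $H_{\tau,-c}(Rf)=R\cdot H_{\tau,-c}(f)=Rf$.

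The key computation uses the identity $(t-c)^{p^i}=t^{p^i}-V_{p^i}(c)$ from Section \ref{subsec:diff}, which holds for every $c\in K$ with the simplified formula $V_p(b)=b^p+\delta^{p-1}(b)$ because $K$ is commutative. Since $H_{\tau,-c}\in {\rm Aut}_F(R)$, the automorphism $\tau$ fixes $F$, hence fixes the coefficients $a_i$ of $g(t)$; applying $H_{\tau,-c}$ to $f(t)=g(t)-d$ then yields
\[
H_{\tau,-c}(f)=g(t-c)-\tau(d)=g(t)-V_g(c)-\tau(d)=f(t)+d-V_g(c)-\tau(d).
\]
Under the hypothesis $V_g(c)=d-\tau(d)$, this reduces to $H_{\tau,-c}(f)=f$, so $H_{\tau,-c}$ descends to an $F$-automorphism of $(K,\delta,d)$.

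For the ``in particular'' assertion, assume $\tau$ has order $m$ and $d\in{\rm Fix}(\tau)$, and take $c=0$. Then $V_g(0)=0=d-\tau(d)$ is automatic, so $H_{\tau,0}\in{\rm Aut}_F((K,\delta,d))$. A routine composition check on the generators $K$ and $t$ shows $(H_{\tau,0})^k=H_{\tau^k,0}$ (since $H_{\tau,0}$ fixes $t$ and acts as $\tau$ on $K$), so its order equals the order of $\tau$, namely $m$. There is no real obstacle here: the whole argument is the commutative shadow of Theorem \ref{thm:main1}, essentially reducing to the single algebraic identity $g(t-c)=g(t)-V_g(c)$ and the observation that $\tau|_F=\mathrm{id}_F$.
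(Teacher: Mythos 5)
Your proposal is correct and follows essentially the same route as the paper, which proves Theorem \ref{thm:main2} simply by specializing the computation of Theorem \ref{thm:main1} ($H_{\tau,-c}(f)=g(t-c)-\tau(d)=f(t)+d-V_g(c)-\tau(d)$, hence $H_{\tau,-c}(f)=f$ when $V_g(c)=d-\tau(d)$) to $D=C=K$. Your added verification that $(H_{\tau,0})^k=H_{\tau^k,0}$, so the induced subgroup has order $m$, is a harmless and correct amplification of what the paper leaves implicit.
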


\bigskip
\begin{corollary}
Assume that $\delta\circ \tau=\tau\circ \delta$.
Suppose one of the following holds:
\\ (i)  $d\in {\rm Fix}(\tau)$ (e.g., $(K,\delta,d)$ is associative) and $V(c)=0$.
 \\ (ii)   $V_g(c)=d-\tau(d)$.
 \\ Then  $H_{\tau,-c}$ induces an $F$-automorphism of $(K,\delta,d)$.
\end{corollary}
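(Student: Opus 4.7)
The plan is to mirror the proof of the analogous corollary stated in Section \ref{sec:2}, specialized to the commutative setting $D=K$. First I would verify that $H_{\tau,-c}$ is a ring automorphism of $R=K[t;\delta]$. Since $K$ is a field, the defining condition (\ref{main}) with $e=1$ and $c\in K$ becomes
\[
c\tau(z) + \delta(\tau(z)) = \tau(z)c + \tau(\delta(z)),
\]
which collapses, by commutativity of $K$, to $\delta(\tau(z)) = \tau(\delta(z))$. This is exactly the hypothesis $\delta\circ\tau = \tau\circ\delta$. Hence $H_{\tau,-c}\in {\rm Aut}_F(R)$, and by the canonical construction recalled at the end of Section \ref{subsec:structure}, it induces an $F$-algebra isomorphism $(K,\delta,d) = S_f \cong S_{H_{\tau,-c}(f)}$.

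The next step is to check that $H_{\tau,-c}(f) = f$, so that this canonical isomorphism becomes an $F$-automorphism of $(K,\delta,d)$. Using the identity $(t-c)^{p^i} = t^{p^i} - V_{p^i}(c)$ from Section \ref{subsec:diff}, together with the additivity of $g$ as a $p$-polynomial, I compute
\[
H_{\tau,-c}(f) = g(t-c) - \tau(d) = g(t) - V_g(c) - \tau(d) = f(t) + d - V_g(c) - \tau(d).
\]
In case (i), $d\in {\rm Fix}(\tau)$ forces $d - \tau(d) = 0$, while $V(c) = V_g(c) = 0$, so the right-hand side collapses to $f(t)$. In case (ii), the hypothesis $V_g(c) = d - \tau(d)$ again yields $d - V_g(c) - \tau(d) = 0$ and hence $H_{\tau,-c}(f) = f$. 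In either case, Theorem \ref{thm:main2} applies and delivers the claimed $F$-automorphism.

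There is no genuine obstacle here: the statement is essentially a packaging of two sufficient conditions under which the single hypothesis $V_g(c) = d - \tau(d)$ of Theorem \ref{thm:main2} holds automatically. The only subtlety worth highlighting is that the commutator condition in (\ref{main}) disappears because $K$ is commutative, so that the compatibility $\delta\circ\tau = \tau\circ\delta$ alone is enough to ensure $H_{\tau,-c}\in {\rm Aut}_F(R)$, in contrast to the noncommutative setting of the corresponding corollary for $(D,\delta,d)$.
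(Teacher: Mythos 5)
Your proposal is correct and follows essentially the same route as the paper: the paper derives this corollary by the same computation $H_{\tau,-c}(f)=g(t-c)-\tau(d)=f(t)+d-V_g(c)-\tau(d)$ used for the $(D,\delta,d)$ case, with both hypotheses (i) and (ii) reducing to $V_g(c)=d-\tau(d)$ so that Theorem \ref{thm:main2} applies. Your extra observation that the condition (\ref{main}) collapses to $\delta\circ\tau=\tau\circ\delta$ because $K$ is commutative is accurate and consistent with the paper's treatment.
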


\bigskip
\begin{corollary}\label{cor:7'}
(i) The set
$$\{H_{\tau,0}\,|\, \tau\in {\rm Aut}(K) \text{ such that } \delta\circ \tau=\tau\circ \delta \text{ and } d\in {\rm Fix}(\tau) \}$$
 is a subgroup of ${\rm Aut}((K,\delta,d))$.
In particular, if $(K,\delta,d)$ is associative then
$$\{H_{\tau,0}\,|\, \tau\in {\rm Aut}(K) \text{ such that } \delta\circ \tau=\tau\circ \delta  \}$$
 is a subgroup of ${\rm Aut}_F((K,\delta,d))$.
\\ (ii)  For all $c\in C$, $id_K\in {\rm Aut}(K)$ extends to an automorphism $H=H_{id,-c}\in {\rm Aut}((K,\delta,d))$.
\end{corollary}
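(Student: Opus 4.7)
The plan is to deduce both parts directly from Theorem \ref{thm:main2}, mirroring the argument used for Corollary \ref{cor:7} in the division-algebra setting.

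For part (i), I would specialize Theorem \ref{thm:main2} to $c=0$. The condition that $H_{\tau,0}$ lies in $\mathrm{Aut}(R)$ reduces, via the characterization of automorphisms of $K[t;\delta]$ recalled in Section \ref{subsec:diff}, to $\delta\circ\tau=\tau\circ\delta$, while the condition $V_g(0)=d-\tau(d)$ collapses to $d\in\mathrm{Fix}(\tau)$; both are assumed in the statement, so each such $H_{\tau,0}$ descends to an $F$-automorphism of $(K,\delta,d)$. To verify the subgroup property, I would observe that $\tau\mapsto H_{\tau,0}$ is a group homomorphism from the set of admissible $\tau$ into $\mathrm{Aut}((K,\delta,d))$, since $H_{\tau_1,0}\circ H_{\tau_2,0}=H_{\tau_1\tau_2,0}$ reads off directly from the formula $H_{\tau,0}(\sum a_i t^i)=\sum \tau(a_i)t^i$. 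The conditions $\delta\circ\tau=\tau\circ\delta$ and $\tau(d)=d$ are manifestly closed under composition and inversion, so the image is a subgroup. For the associative subcase one has $d\in F$, and any element of $\mathrm{Aut}_F(K)$ fixes $F={\rm Const}(\delta)$ pointwise, so the condition $d\in\mathrm{Fix}(\tau)$ becomes automatic.

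For part (ii), I would apply Theorem \ref{thm:main2} with $\tau=\mathrm{id}_K$. Then $\delta\circ\mathrm{id}=\mathrm{id}\circ\delta$ holds trivially and $d-\mathrm{id}(d)=0$, so the compatibility condition $V_g(c)=d-\tau(d)$ reduces to $V_g(c)=0$, which is the implicit condition on $c$ understood just as in the analogous Corollary \ref{cor:7}(ii). Therefore $H_{\mathrm{id},-c}\in\mathrm{Aut}((K,\delta,d))$.

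Since the entire argument is a straightforward specialization of Theorem \ref{thm:main2}, I do not anticipate any substantive obstacle. The only mild point to be careful about is verifying closure of the defining conditions on $\tau$ under composition and inverses in (i), but this is transparent once the homomorphism $\tau\mapsto H_{\tau,0}$ is in hand, together with the elementary observations that if $\delta\circ\tau_i=\tau_i\circ\delta$ for $i=1,2$ then the same holds for $\tau_1\tau_2$ and $\tau_1^{-1}$, and that $\mathrm{Fix}(\tau_1)\cap\mathrm{Fix}(\tau_2)\subseteq\mathrm{Fix}(\tau_1\tau_2)$ with $\mathrm{Fix}(\tau)=\mathrm{Fix}(\tau^{-1})$.
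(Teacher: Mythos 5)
Your proposal is correct and follows essentially the same route as the paper: Corollary \ref{cor:7'} is obtained there exactly as Corollary \ref{cor:7} was, by specializing Theorem \ref{thm:main2} (take $c=0$ for (i), $\tau=id$ for (ii)) and using the map $\tau\mapsto H_{\tau,0}$, with the subgroup closure checks you spell out left implicit. Your reading of (ii) with the implicit condition $V_g(c)=0$ matches the paper's own treatment of the analogous statement for $(D,\delta,d)$.
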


\bigskip
 \begin{example}
If $g(t)=t^p-t\in F[t]$ is the minimum polynomial of $\delta$ and $f(t)=t^p-t-d\in K[t;\delta]$, then the automorphism group of $(K,\delta,d)$
has a cyclic subgroup of order $p$  generated by $H_{id,-1}$ which leaves $K$ invariant.
If $f(t)=t^p-t-d \in F[t]$ is irreducible then Amitsur called the division algebra
 $(K,\delta,d)$  a \emph{cyclic extension of $K$} of degree $p$ \cite{Am2} because  it can be seen as a noncommutative generalization of
 a cyclic field extension of $K$: it  has dimension $p$ as a $K$-vector space and
 the automorphism group of $(K,\delta,d)$ has a cyclic subgroup of order $p$.
 All associative cyclic extensions of $K$ of degree $p$ are of this form  \cite{Am2}.
Note that they always contain the cyclic separable field extension $F[t]/(t^p-t-d)$ of degree $p$, so that in the associative case,
these algebras are always cyclic.
\end{example}

\bigskip
Since ${\rm Nuc}((K,\delta, d))=K$, for all $0\not=a\in K$, $G_a(x)=a^{-1}x a$ is an inner automorphism of $(K,\delta,d)$
extending $id_K$. By the obvious generalizations of \cite[Lemma 2, Theorem 3, 4]{W09} to infinite fields,  $\{G_a\,|\, 0\not=a\in K \}$ is a non-trivial
subgroup of ${\rm Aut}_F((K, \delta, d))$. It is of course well-known that $\{G_a\,|\, 0\not=a\in {\rm Nuc}((K, \delta, d))=(K,\delta, d) \}={\rm Aut}_F((K,\delta, d))$ when $(K, \delta, d)$ is associative.
\\
\begin{theorem}\label{thm:main4}
Let $a\in K^\times$ and $G_a$ be an inner automorphism, then $G_a=H_{id, a^{-1}\delta(a)}$.
\end{theorem}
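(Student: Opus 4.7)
The plan is to apply the structure theorem for automorphisms (Theorem \ref{thm:aut1K}) to the concrete inner automorphism $G_a$, by determining its action on the two natural generating pieces of $(K,\delta,d)$: the nucleus $K$ and the element $t$.

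First, I would record that $K$ is commutative and that $K = \mathrm{Nuc}((K,\delta,d))$, so for any $x \in K$ we have $G_a(x) = (a^{-1}x)a = a^{-1}xa = x$; thus $G_a|_K = \mathrm{id}_K$. Next I would compute $G_a(t)$. Since $a^{-1} \in K$ lies in the nucleus, the expression $(a^{-1}t)a$ may be reassociated as $a^{-1}(ta)$. In the quotient algebra the product $ta$ has degree $1 < p^e$, so the relation $ta = at + \delta(a)$ of $R = K[t;\delta]$ survives to $(K,\delta,d)$, and therefore
\[
G_a(t) = a^{-1}(at + \delta(a)) = t + a^{-1}\delta(a).
\]

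Now Theorem \ref{thm:aut1K} forces any automorphism of $(K,\delta,d)$ to have the form $H_{\tau,c,e}$, uniquely determined by its restriction to $K$ (which equals $\tau$) and by its value on $t$ (which equals $et+c$). Reading off from the two computations above, $\tau = \mathrm{id}_K$, $e = 1$, and $c = a^{-1}\delta(a)$, so $G_a = H_{\mathrm{id},\, a^{-1}\delta(a)}$, as claimed.

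The only subtlety I foresee is justifying the rearrangement $(a^{-1}t)a = a^{-1}(ta)$ and the persistence of the rule $ta = at + \delta(a)$ in the nonassociative quotient; both reduce to invoking that $K \subseteq \mathrm{Nuc}((K,\delta,d))$ and that the degree-$1$ element $ta$ is not reduced modulo $f$. Once these points are in place, the theorem is essentially a one-line consequence of Theorem \ref{thm:aut1K}. (As a sanity check, one can verify that $H_{\mathrm{id},\,a^{-1}\delta(a)}$ does preserve $f$: applying the formula $g(t+c) = g(t) + V_g(c)$ from Section \ref{sec:prel} gives $H(f) = f + V_g(a^{-1}\delta(a))$, and $V_g(\delta(a)/a) = 0$ by the characterization of logarithmic derivatives recalled in Section \ref{sec:prel}.)
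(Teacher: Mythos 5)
Your proposal is correct and follows essentially the same route as the paper: invoke Theorem \ref{thm:aut1K} to write $G_a=H_{\tau,c,e}$, note $G_a|_K=\mathrm{id}_K$, and compute $G_a(t)=a^{-1}ta=t+a^{-1}\delta(a)$ to read off $\tau=\mathrm{id}$, $e=1$, $c=a^{-1}\delta(a)$. The extra justifications you supply (reassociation via $K\subseteq\mathrm{Nuc}((K,\delta,d))$, persistence of $ta=at+\delta(a)$ in the quotient, and the $V_g(\delta(a)/a)=0$ sanity check) are sound but are left implicit in the paper's one-line proof.
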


\begin{proof}
Since $G_a\in {\rm Aut}_F((K,\delta,d))$, $G_a=H_{\tau,c,e}$ and since $G_a(t)=a^{-1}ta=t+a^{-1}\delta(a)$ we get $G_a=H_{id, a^{-1}\delta(a)}$.
\end{proof}

The results of the previous sections, read in this context, thus mean that
$$\{\delta(a)/a\,|\, a\in K \}\cong\{H_{id,-c}\,|\, c\in K \text{ with } V(c)=0\}$$
is  a subgroup of ${\rm Aut}_F((K,\delta,d) )$ of inner automorphisms.
Within that subgroup,
 $\langle H_{id,-1}\rangle $ is a cyclic subgroup  of order $p$ which leaves $K$ invariant.
 Indeed, for all  $c\in K$ with $V(c)=0$, the inner automorphism $H_{id,-c}\in {\rm Aut}_F((K,\delta,d))$ has order $p$.
   All this holds in particular when $(K,\delta,d)$ is associative.

 \bigskip
 \begin{example}\label{ex:1}
Let  $x$ be an indeterminate and canonically extend the derivation $\delta$ on $K$  to $K(x)$ via setting
$\delta(x)=0$. Suppose that $\delta:K\rightarrow K$ has minimum polynomial $g(t)=t^p-t\in F[t]$
 Then ${\rm Const}(\delta)=F(x)$ and $g(t)=t^p-t\in F(x)[t]$
is the minimal polynomial of the extended derivation $\delta$. Let $A=(K(x),\delta,h(x))$ for $h(x)\in K(x)$.
 For $h(x)=x$, $(K(x),\delta,x)$ is an associative cyclic division algebra over $F(x)$ of degree $p$ \cite[Proposition 1.9.10]{J96}. For all $h(x)\in K(x)\setminus F(x)$,
$(K(x),\delta,h(x))$
 is a nonassociative cyclic division algebra over $F(x)$
of degree $p$  \cite[Example 16]{P17}. In both cases (associative and nonassociative), the $F(x)$-automorphisms of $A$ are canonically induced by the $F(x)$-automorphisms of $K(x)[t;\delta]$ and
consist of the maps $H_{\tau,c,e}$ where $e\in K(x)^\times$ and $\tau \in {\rm Aut}_{F(x)}(K(x))$  satisfy $\tau(\delta(z))=e\delta(\tau(z))$
for all $z\in K(x)$. Furthermore,
 $\{H_{id,-c}\,|\, c\in K(x) \text{ with } V(c)=0\}$
is a subgroup of ${\rm Aut}_{F(x)}(A)$ of inner automorphisms and
 $\langle H_{id,-1}\rangle $ is a cyclic subgroup of order $p$ within this one  which leaves $K(x)$ invariant.

 \end{example}

\emph{Acknowledgements}  This paper was written while the author was a visitor at the University of Ottawa. She acknowledges support from  the Centre de Recherches Math\'ematiques for giving a colloquium talk,
  and from Monica Nevins' NSERC Discovery Grant RGPIN-2020-05020. The author herself does not hold any grant. She would like to thank the Department of Mathematics and Statistics for its hospitality. She would also like to thank the anonymous referee for the kind report.


\end{document}